
\documentclass{article}%
\usepackage{graphicx}
\usepackage[intlimits]{amsmath}
\usepackage{latexsym}
\usepackage{amsfonts}
\usepackage{amssymb}%
\setcounter{MaxMatrixCols}{30}
\setlength{\textwidth}{6.3in} \setlength{\textheight}{8.7in}
\setlength{\topmargin}{0pt} \setlength{\headsep}{0pt}
\setlength{\headheight}{0pt} \setlength{\oddsidemargin}{10pt}
\setlength{\evensidemargin}{0pt}
\makeatletter
\newfont{\footsc}{cmcsc10 at 8truept}
\newfont{\footbf}{cmbx10 at 8truept}
\newfont{\footrm}{cmr10 at 10truept}
\pagestyle{plain}
\newtheorem{theorem}{Theorem}

\newtheorem{conjecture}[theorem]{Conjecture}

\newtheorem{lemma}[theorem]{Lemma}

\newenvironment{proof}[1][Proof]{\noindent{\textbf {#1}  }}  {\hfill$\Box$\bigskip}

\begin{document}

\title{Maxima of the Q-index: forbidden odd cycles }
\author{Xiying Yuan\thanks{Department of Mathematics, Shanghai University, Shanghai,
200444, China; email: \textit{xiyingyuan2007@hotmail.com}} \thanks{Research
supported by National Science Foundation of China (No. 11101263), and by a
grant of \textquotedblleft The First-class Discipline of Universities in
Shanghai\textquotedblright.} }
\maketitle

\begin{abstract}
Let $q\left(  G\right)  $ be the $Q$-index (the largest eigenvalue of the
signless Laplacian) of $G$. Let $S_{n,k}$ be the graph obtained by joining
each vertex of a complete graph of order $k$ to each vertex of an independent
set of order $n-k.$ The main result of this paper is the following theorem:
$\medskip$

Let $k\geq3,$ $n\geq110k^{2},$ and $G$ be a graph of order $n$. If $G$ has no
$C_{2k+1},$ then $q\left(  G\right)  <q\left(  S_{n,k}\right)  ,$ unless
$G=S_{n,k}.$

This result proves the odd case of the conjecture in [M.A.A. de Freitas, V.
Nikiforov, and L. Patuzzi, Maxima of the $Q$-index: forbidden $4$-cycle and
$5$-cycle, \emph{Electron. J. Linear Algebra }26 (2013), 905-916.]

\textbf{AMS classification: }\textit{15A42, 05C50}

\textbf{Keywords:}\textit{ signless Laplacian; }$Q$\textit{-index; forbidden
odd cycles.}

\end{abstract}

\section{Introduction}

Let $G=\left(  V\left(  G\right)  ,E\left(  G\right)  \right)  $ be a simple
graph. Denote by $\nu\left(  G\right)  $ the order of $G,$ and $e\left(
G\right)  $ the size of $G,$ that is to say, $\nu\left(  G\right)  =|V\left(
G\right)  |,$ and $e\left(  G\right)  =|E\left(  G\right)  |.$ Set $\Gamma
_{G}\left(  u\right)  =\left\{  v\text{
$\vert$
}uv\in E\left(  G\right)  \right\}  ,$ and $d_{G}\left(  u\right)
=|\Gamma_{G}\left(  u\right)  |;$ or simply $\Gamma\left(  u\right)  $ and
$d\left(  u\right)  ,$ respectively. Let $\delta\left(  G\right)  $ and
$\Delta\left(  G\right)  $ denote the minimal degree and maximal degree of
graph $G,$ respectively. As usual, we may let $P_{k},$ $C_{k},$ and $K_{k}$ be
the path, cycle, and complete graph of order $k,$ respectively$.$ Let
$S_{n,k}$ be the graph obtained by joining each vertex of a $K_{k}$ to each
vertex of a $\overline{K}_{n-k};$ in other words, $S_{n,k}=K_{k}\vee
\overline{K}_{n-k}.$ Also, let $S_{n,k}^{+}$ be the graph obtained by adding
an edge to $S_{n,k}.$ For notation and concepts undefined here, we refer the
reader to \cite{Bol98}.

The $Q$-index of $G$ is the largest eigenvalue $q\left(  G\right)  $ of its
signless Laplacian $Q\left(  G\right)  $. For more basic facts about $q\left(
G\right)  ,$ we refer the reader to \cite{C10}. The following conjecture has
been proposed in \cite{FNP13}.

\begin{conjecture}
\label{con1} Let $k\geq2$ and let $G$ be a graph of sufficiently large order
$n.$ If $G$ has no $C_{2k+1},$ then $q\left(  G\right)  <q\left(
S_{n,k}\right)  ,$ unless $G=S_{n,k}.$ If $G$ has no $C_{2k+2},$ then
$q\left(  G\right)  <q\left(  S_{n,k}^{+}\right)  ,$ unless $G=S_{n,k}^{+}.$
\end{conjecture}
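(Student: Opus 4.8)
The plan is to argue by contradiction: assume $G$ is $C_{2k+1}$-free with $q(G)\ge q(S_{n,k})$ and $G\neq S_{n,k}$, and deduce a contradiction. First I would pin down $q(S_{n,k})$ exactly. The partition of $V(S_{n,k})$ into the $k$ vertices of the $K_k$ and the $n-k$ vertices of the $\overline{K}_{n-k}$ is equitable for $Q(S_{n,k})$, with quotient matrix $\left(\begin{smallmatrix} n+k-2 & n-k\\ k & k\end{smallmatrix}\right)$, so $q(S_{n,k})$ is the larger root of $x^2-(n+2k-2)x+2k^2-2k=0$; using $\sqrt{1-t}\ge 1-t$ this gives $q(S_{n,k})>n+2k-2-\tfrac{4k^2}{n}$. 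Thus the working hypothesis becomes $q:=q(G)>n+2k-2-\tfrac{4k^2}{n}$, a value strictly larger than the $q=n$ attained by any complete bipartite graph; this gap is exactly what the clique $K_k$ must account for. I would then fix the Perron eigenvector $x$ of $Q(G)$, scaled so that $\max_v x_v=x_u=1$.

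The local engine of the argument is the observation that $G$ being $C_{2k+1}$-free forbids, for every vertex $w$, a path on $2k$ vertices inside $N(w)$, since such a path closes up through $w$ into a $C_{2k+1}$. Hence $G[N(w)]$ is $P_{2k}$-free and, by the Erd\H{o}s--Gallai theorem, $e(G[N(w)])\le (k-1)\,d(w)$. Writing out $Q(G)x=qx$ at $u$ gives $q=d(u)+\sum_{v\sim u}x_v$, and applying $Q(G)$ a second time yields
\[ q^2=q\,d(u)+\sum_{v\sim u}d(v)x_v+\sum_{v\sim u}\sum_{w\sim v}x_w. \]
Splitting the length-two walks from $u$ according to the location of their endpoint and inserting the bound $e(G[N(u)])\le(k-1)d(u)$, I would convert this identity into a lower bound on $d(u)$, with the aim of forcing $d(u)\ge n-O(k)$, so that $u$ is almost dominating.

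The \emph{main obstacle} is the cross term $e(N(u),V\setminus N[u])$: bounding the edges leaving $N(u)$ again requires the global $C_{2k+1}$-free hypothesis, because a short path from $N(u)$ into the rest of the graph and back to $N(u)$ is a path joining two neighbours of $u$ and so must avoid length $2k-1$. It is here that the precise threshold $n\ge 110k^2$ is consumed, through careful constant tracking. Once $u$ is shown to be near-dominating, I would bootstrap: the eigenvalue $q\approx n+2k-2$ can be supported only if there is a set $R$ of exactly $k$ vertices each of degree $\ge n-ck$ for a small constant $c$, and identifying $R$ and pinning down $|R|=k$ is the delicate spectral heart of the proof, since this is where the spectral advantage of $K_k$ over a purely bipartite host is quantified.

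With $R$ in hand the combinatorics closes quickly. Every vertex of $R$ misses at most $ck$ others, so the set $W$ of vertices adjacent to all of $R$ has size at least $n-ck^2$. If $W$ contained an edge $xy$, then choosing distinct common neighbours $w_1,\dots,w_{k-1}\in W$ of consecutive pairs of $R$-vertices (possible since $|W|\gg 2k$ for $n\ge 110k^2$) produces the cycle $x\,r_1\,w_1\,r_2\,w_2\cdots r_{k-1}\,w_{k-1}\,r_k\,y\,x$ of length $2k+1$, a contradiction; hence $W$ is independent and fully joined to $R$, so the bulk of $G$ already coincides with $S_{n,k}$. A final cleaning step disposes of the few vertices outside $R\cup W$ and forces $R$ to be complete and completely joined to its complement, giving $G\subseteq S_{n,k}$. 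Since $q$ increases strictly when an edge is added to a connected graph, this yields $q(G)\le q(S_{n,k})$ with equality only for $G=S_{n,k}$, contradicting $G\neq S_{n,k}$ and completing the proof.
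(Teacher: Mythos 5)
Your proposal is an outline of the right general shape, but it leaves unproven exactly the steps where the difficulty of this problem lives, and it silently drops half of the statement. First, the statement is the full conjecture (odd and even forbidden cycles, all $k\geq2$); you only attack the odd case, and the paper itself only proves the odd case for $k\geq3$ and $n\geq110k^2$ (Theorem \ref{main result}), so at best you are sketching a proof of that theorem, not of the conjecture. Within the odd case, your preliminary computations are sound: the quotient matrix and the bound $q(S_{n,k})>n+2k-2-O(k^2/n)$ match (\ref{lb}), and the observation that $G[N(w)]$ is $P_{2k}$-free, hence $e(G[N(w)])\leq(k-1)d(w)$ by Erd\H{o}s--Gallai, is the same engine the paper uses in Lemma \ref{MxD}. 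Your $C_{2k+1}$ construction through an edge of $W$ is also correct.

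The genuine gap is the step you yourself flag as ``the delicate spectral heart'': the claim that $q(G)\geq q(S_{n,k})$ forces a set $R$ of \emph{exactly} $k$ vertices of degree $\geq n-ck$. You give no argument for it, and it is false as stated without substantial additional work: the graph $K_{1}\vee\left(S_{m,k-1}\cup S_{m,k-1}\right)$ with $m=(n-1)/2$ is $C_{2k+1}$-free, has $e(G)$ within $O(k^2)$ of $e(S_{n,k})$, and has only one vertex of degree $n-1$ together with $2(k-1)$ vertices of degree about $n/2$ --- no set of $k$ near-dominating vertices exists, yet ruling it out requires a genuine eigenvector perturbation estimate, not a degree count. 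The paper's route avoids this trap: Lemma \ref{MxD} first upgrades ``near-dominating'' to $\Delta(G)=n-1$ exactly (via the Merris bound and a careful analysis of the sets $A$, $B$, $A'$ and the component structure of $G[A]$); then the combinatorial stability theorem of Ali and Staton (Lemma \ref{AS}), applied after the degeneracy reduction of Lemma \ref{subgaph H}, classifies the $P_{2k}$-free neighbourhood $G_w$ into components that are subgraphs of $S_{h_i,k-1}$ or copies of $L_{p,k-1}$; finally Lemmas \ref{Removing} and \ref{one S} dispose of small leftover components and of the case of two or more $S$-type components by explicit Rayleigh-quotient comparisons. Your sketch never confronts disconnectedness of the neighbourhood or the $L_{t,k-1}$ competitor at all, and the ``careful constant tracking'' you defer for the cross term $e(N(u),V\setminus N[u])$ is precisely the content of Lemma \ref{MxD}. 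Until the existence and uniqueness of the $k$-hub set $R$ is actually established --- or replaced by a structural argument like the paper's --- the proof does not close.
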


In \cite{Nik201310}, Conjecture \ref{con1} was solved asymptotically by the
following results.

\begin{theorem}
\cite{Nik201310} Let $k\geq2,$ $n>6k^{2},$ and let $G$ be a graph of order
$n.$ If $q\left(  G\right)  \geq n+2k-2,$ then $G$ contains a cycle of length
$l$ for each $l\in\left\{  3,4,\ldots,2k+2\right\}  .$
\end{theorem}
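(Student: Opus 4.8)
The plan is to convert the spectral hypothesis into two combinatorial facts and then extract cycles of all the required lengths from a single long path rooted at a vertex of large degree. First I would feed $q(G)\ge n+2k-2$ into the standard upper bound $q(G)\le \frac{2e(G)}{n-1}+n-2$ (which holds with equality for $K_n$ and for stars), obtaining $e(G)\ge k(n-1)$; so $G$ is edge-dense, with average degree essentially $2k$. Second, I would use the Perron eigenvector $\mathbf{x}$ of $Q(G)$, normalized so that its largest entry is $x_u=1$. The eigen-equation at $u$ reads $(q-d(u))\,x_u=\sum_{w\in\Gamma(u)}x_w\le d(u)$, whence $q\le 2d(u)$ and $d(u)\ge q/2\ge (n+2k-2)/2$. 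Thus $G$ carries a vertex $u$ adjacent to more than half of all vertices, and the complementary set $B=V\setminus(\Gamma(u)\cup\{u\})$ has at most $(n-2k)/2$ vertices.

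The mechanism that produces all cycle lengths at once is a single path rooted at $u$. If $v_0v_1\cdots v_{2k}$ is a path on $2k+1$ vertices all lying in $A:=\Gamma(u)$, then for each $i$ with $1\le i\le 2k$ the closed walk $u\,v_0v_1\cdots v_i\,u$ is a cycle on the $i+2$ distinct vertices $u,v_0,\dots,v_i$ (its end edges $uv_0$ and $uv_i$ exist because $v_0,v_i\in\Gamma(u)$); as $i$ ranges over $\{1,\dots,2k\}$ these lengths sweep out exactly $\{3,4,\dots,2k+2\}$. So it suffices to exhibit a path on $2k+1$ vertices inside $G[A]$. For this I would invoke the Erd\H{o}s--Gallai theorem for paths: a graph on $m$ vertices with more than $\tfrac12(2k-1)m$ edges contains a path on $2k+1$ vertices. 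Applied to $G[A]$ with $m=d(u)$, it reduces the whole problem to the single inequality $e(G[A])>\tfrac12(2k-1)\,d(u)$. More flexibly, one may instead seek, for each $\ell\le 2k+1$, a path of length $\ell$ in $G-u$ whose two endpoints lie in $A$, and close it through $u$; since $|A|>n/2$, almost every vertex is a neighbour of $u$, so a longest path in the still-dense graph $G-u$ (which retains at least $(k-1)(n-1)$ edges) ought to be forceable to have its ends in $A$.

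The main obstacle is precisely the step of localizing enough edges near $u$, that is, proving $e(G[A])>\tfrac12(2k-1)\,d(u)$ (or the rooted-path variant). The edge count $e(G)\ge k(n-1)$ alone cannot deliver this, and the gap is not cosmetic: the complete bipartite graph $K_{2k+1,\,n-2k-1}$ has more than $k(n-1)$ edges yet contains no odd cycle whatsoever, so no argument using only the number of edges can succeed. What rescues the statement is that this bipartite graph has $q=n<n+2k-2$, so the hypothesis $q(G)\ge n+2k-2$ genuinely excludes near-bipartite configurations. I therefore expect the crux to be a stability/eigenvector argument showing that the hypothesis forces the edges to concentrate inside $A\cup\{u\}$ rather than to spread across the bipartition $(A\cup\{u\},\,B)$: concretely, bounding the eigenvector weight $\sum_{w\in B}x_w$ together with the cross edges $e(A,B)$ and the edges $e(B)$, and comparing the resulting Rayleigh quotient against $n+2k-2$ to force $e(G[A])$ above the Erd\H{o}s--Gallai threshold. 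This is where the assumption $n>6k^2$ should be spent. Once that density is secured, the path-rooting mechanism of the previous paragraph closes the proof at once.
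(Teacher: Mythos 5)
Note first that the paper does not prove this theorem at all: it is quoted verbatim from \cite{Nik201310} and used as a black box, so there is no in-paper proof to compare against. Judged on its own terms, your proposal is sound in its correct parts --- the deduction $e(G)\geq k(n-1)$ from Lemma \ref{Das}, the bound $q(G)\leq 2\Delta(G)$ from the eigenequation at a maximal entry of the Perron vector, and the mechanism of closing an initial segment $v_0\cdots v_i$ of a path in $\Gamma(u)$ through $u$ to harvest cycles of every length from $3$ to $2k+2$ --- but it is not a proof, and you say so yourself: the entire weight of the argument rests on the inequality $e(G[A])>\tfrac12(2k-1)d(u)$ with $A=\Gamma(u)$, and for that step you only write that you ``expect'' a stability/eigenvector argument to work. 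That is precisely the step where the spectral hypothesis must be converted into local edge density, and leaving it as an expectation leaves the theorem unproved. Your $K_{2k+1,n-2k-1}$ example correctly shows the gap is not cosmetic.

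Two concrete remarks on how the gap is actually closed. First, the localization you are missing does not come from Das's bound plus $q\leq 2\Delta$; it comes from the Merris-type bound of Lemma \ref{Merris bound} combined with the identity
\[
\sum_{v\in\Gamma\left(  u\right)  }d\left(  v\right)  =2e\left(  \Gamma\left(
u\right)  \right)  +e\left(  \Gamma\left(  u\right)  ,V\left(  G\right)
\backslash\Gamma\left(  u\right)  \right)  ,
\]
both of which are already displayed in this paper and are exactly the engine of its Lemma \ref{MxD}. Taking $u$ to be the vertex attaining the Merris maximum and bounding $e(A,B)\leq|A||B|$ yields $n+2k-2\leq q(G)\leq n+\frac{2e(A)}{d(u)}$, hence $2e(A)\geq(2k-2)d(u)$, with no stability argument needed. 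Second, even with this in hand you are not done: the Erd\H{o}s--Gallai threshold $(2k-2)|A|/2$ only forces $P_{2k}\subseteq G[A]$ (or $G[A]$ a union of copies of $K_{2k-1}$), which closes up through $u$ to give $C_3,\dots,C_{2k+1}$ but \emph{not} $C_{2k+2}$; producing $C_{2k+2}$ requires a $P_{2k+1}$ in $A$, i.e.\ the stronger threshold $(2k-1)|A|/2$ that the first-order computation does not deliver. Handling that last cycle length (and the extremal clique-union configurations) is the genuinely delicate part of Nikiforov's proof in \cite{Nik201310}, and it is absent from your sketch.
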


There are many helpful guidance provided in \cite{Nik201310} for solving
Conjecture \ref{con1}. By following the clue provided in \cite{Nik201310} and
by some analysis we will give the complete solution for the odd case of
Conjecture \ref{con1}. More precisely, we will prove the following result.

\begin{theorem}
\label{main result} Let $k\geq3,$ $n\geq110k^{2},$ and let $G$ be a graph of
order $n$.\ If $G$ has no $C_{2k+1},$ then $q\left(  G\right)  <q\left(
S_{n,k}\right)  ,$ unless $G=S_{n,k}.$
\end{theorem}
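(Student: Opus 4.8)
The plan is to argue by extremality. Among all $C_{2k+1}$-free graphs of order $n$ I would choose $G$ with $q(G)$ as large as possible. Since $S_{n,k}$ is itself $C_{2k+1}$-free — its longest cycle has length $2k$, because on any cycle the pairwise non-adjacent vertices of $\overline{K}_{n-k}$ must be separated by vertices of the $K_k$, giving at most $k$ of them and circumference at most $2k$ — we have $q(G)\ge q(S_{n,k})$, so it suffices to prove $G=S_{n,k}$. First I would pin down $q(S_{n,k})$ exactly: the partition $V(S_{n,k})=V(K_k)\cup V(\overline K_{n-k})$ is equitable, with signless-Laplacian quotient matrix $\left(\begin{smallmatrix} n+k-2 & n-k \\ k & k\end{smallmatrix}\right)$, so $q(S_{n,k})$ is the larger root of $\lambda^2-(n+2k-2)\lambda+2k(k-1)=0$, whence $q(S_{n,k})>n+2k-2-\frac{2k(k-1)}{n+2k-2}$. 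On the other hand, applying the cited theorem of Nikiforov (valid since $n>6k^2$) to the $C_{2k+1}$-free graph $G$ gives $q(G)<n+2k-2$. Thus $q(G)$ is trapped in the short interval $[\,q(S_{n,k}),\,n+2k-2)$ of length $O(k^2/n)$, and the whole argument is about exploiting this narrow window.

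Next I would extract a dominating core. Let $x$ be the Perron eigenvector of $Q(G)$ normalized so that $\max_v x_v=x_u=1$. From $(Q(G)x)_v=d(v)x_v+\sum_{w\sim v}x_w=q(G)\,x_v$ one gets immediately $q(G)=d(u)+\sum_{w\sim u}x_w\le 2d(u)$, so $d(u)\ge q(G)/2\approx n/2$; bootstrapping by substituting the eigen-equation into itself (estimating $(Q(G)^2x)_u$) and using $q(G)<n+2k-2$ forces the eigenvector weight, and hence the large degrees, to concentrate on a small set. The target is a set $A$ with $x_v$ close to $1$ and $d(v)\ge n-ck$ for an absolute constant $c$, together with the claim that $|A|$ is close to $k$. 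This is where $n\ge 110k^2$ enters: the slack $n+2k-2-q(G)=O(k^2/n)$ must be small enough that a vertex of degree noticeably below $n-1$, or a $(k+1)$-st high-weight vertex, would push $q(G)$ outside the admissible interval.

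With the near-dominating set $A$ in hand, the combinatorial heart is a path-closing argument. Because each $v\in A$ misses at most $ck$ vertices while $n\ge 110k^2$, any edge $yz$ with $y,z\in V\setminus A$ can be greedily extended to the cycle $y\,z\,a_1w_1a_2w_2\cdots a_{k-1}w_{k-1}a_k\,y$ of length exactly $2k+1$, where $a_1,\dots,a_k\in A$ and the distinct $w_1,\dots,w_{k-1}\in V\setminus A$ are chosen to avoid the few non-neighbours of the $a_i$; this contradicts $C_{2k+1}$-freeness, so $V\setminus A$ is independent. The very same construction with $|A|=k+1$ already produces a $C_{2k+1}$, so $|A|\le k$; and $|A|=k-1$ is impossible, since it would force $q(G)\le q(S_{n,k-1})<q(S_{n,k})$. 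Hence $|A|=k$, the set $V\setminus A$ is independent, and therefore $G\subseteq S_{n,k}$. Finally, because $q(\cdot)$ strictly increases when an edge is added to a connected graph, and $S_{n,k}$ is the unique maximal $C_{2k+1}$-free graph with $A$ as its dominating part, the extremal $G$ must equal $S_{n,k}$, giving the strict inequality $q(G)<q(S_{n,k})$ for every other $C_{2k+1}$-free $G$.

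The step I expect to be the true obstacle is the core-extraction of the second paragraph: converting the $O(k^2/n)$ spectral window into the exact statement that $|A|=k$ and every vertex of $A$ has degree within $ck$ of $n-1$. This needs sharp two-sided eigenvalue estimates — bounding $q(G)$ from below by the Rayleigh quotient at the structure we expect and from above by the local eigen-equations at candidate low-weight vertices — and it is precisely the tightness of these estimates that dictates the threshold $n\ge 110k^2$. By contrast, the path-closing lemma is conceptually routine once the degrees are controlled, although the bookkeeping that keeps the $2k+1$ chosen vertices distinct must be checked against the same degree bounds.
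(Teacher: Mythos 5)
There is a genuine gap, and it sits exactly where you yourself suspect: the ``core-extraction'' step of your second paragraph is not an argument but a restatement of the desired conclusion. Producing a set $A$ of exactly $k$ vertices, each of degree at least $n-ck$, with $V\setminus A$ independent, is essentially the whole theorem; the narrow spectral window $q(G)\in[\,n+2k-3,\,n+2k-2)$ does not by itself force the eigenvector or the degrees to concentrate on $k$ vertices, and ``substituting the eigen-equation into itself'' yields only Merris-type bounds involving average degrees over neighbourhoods, not the existence of $k$ vertices of near-full degree. Tellingly, the paper's proof only ever extracts \emph{one} vertex of full degree by spectral means (Lemma \ref{MxD}, which shows $\Delta(G)=n-1$ by combining Merris's bound with the Erd\H{o}s--Gallai bound applied to the $P_{2k}$-free neighbourhood of a maximizing vertex). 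The remaining $k-1$ ``centers'' are then found by purely combinatorial structure theory: a degree-peeling step (Lemma \ref{subgaph H}) produces an induced subgraph $H$ with $\delta(H)\ge k-1$ covering all but $O(k^2)$ vertices, Dirac's theorem and the Ali--Staton stability theorem (Lemma \ref{AS}) force each component of $H$ to be a subgraph of some $S_{h_i,k-1}$ or an $L_{p_i,k-1}$, and two bespoke eigenvector-perturbation lemmas (Lemmas \ref{Removing} and \ref{one S}) are needed to eliminate the near-extremal configurations in which $G-w$ splits into several such pieces --- configurations in which only one vertex has degree close to $n-1$, so that the set $A$ you hypothesise does not even exist at that stage of the argument. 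None of this machinery, nor any substitute for it, appears in your sketch.

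There are also two smaller but real defects in your combinatorial closing step. Your exclusion of $|A|=k+1$ needs two vertices of $A$ to be adjacent to each other in order to close a cycle of \emph{odd} length $2k+1$ (an alternating cycle through $k+1$ near-dominating vertices and $k+1$ outside vertices naturally has even length $2k+2$), and near-dominating vertices need not be pairwise adjacent. Your exclusion of $|A|=k-1$ presupposes that $V\setminus A$ is independent, which your path-closing construction only delivers when $|A|\ge k$, so that step is circular as written. Both could likely be repaired, but only after the core-extraction step has been supplied --- and that step is the real content of the proof.
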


\section{Some auxiliary results}

First we state some well-known results, which will be used in the following proofs.

\begin{lemma}
\label{EGp} \cite{ErGa59}Let $k\geq1.$ If $G$ is a graph of order $n,$ with no
$P_{k+2},$ then $e\left(  G\right)  \leq kn/2,$ with equality holding if and
only if $G$ is a union of disjoint copies of $K_{k+1}.$
\end{lemma}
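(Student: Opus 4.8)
The plan is to prove the full statement (both the edge bound and the equality characterization) by induction on $n$, splitting on the minimum degree. Set $t:=\lfloor k/2\rfloor$ and $\delta:=\delta(G)$. The base cases $n\le k+1$ are immediate, since then $e(G)\le\binom{n}{2}=\tfrac{n(n-1)}{2}\le\tfrac{kn}{2}$, with equality only when $n=k+1$ and $G=K_{k+1}$. For the inductive step ($n\ge k+2$) I would distinguish whether $\delta\le t$ or $\delta\ge t+1$.

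If $\delta\le t$, choose a vertex $v$ of degree $\delta$ and delete it. Since $G-v$ is again $P_{k+2}$-free, induction gives $e(G-v)\le k(n-1)/2$, and hence $e(G)=e(G-v)+\delta\le k(n-1)/2+\lfloor k/2\rfloor\le kn/2$; note that this last step is strict when $k$ is odd. If instead $\delta\ge t+1$, I would invoke the classical fact that a connected graph $H$ with minimum degree $\delta(H)$ contains a path on at least $\min\{|H|,\,2\delta(H)+1\}$ vertices (proved by a rotation argument on a longest path: the endpoints send all their neighbours back into the path, forcing either a long path or a spanning cycle that connectivity would let one extend). Applied to each component $H$ of $G$, which inherits $\delta(H)\ge t+1$, this shows that a component with $|H|\ge k+2$ would contain a path on at least $\min\{|H|,\,2t+3\}\ge k+2$ vertices, because $2\lfloor k/2\rfloor+3\ge k+2$ for every $k$; such a path contains a $P_{k+2}$, contradicting the hypothesis. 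Hence every component satisfies $|H|\le k+1$, so $e(H)\le\binom{|H|}{2}\le k|H|/2$, and summing over components gives $e(G)\le kn/2$.

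For the equality analysis the key point is that equality can only occur in the second case. Suppose $e(G)=kn/2$ with $\delta\le t$; tracing the chain of inequalities above backwards forces $k$ even, $\delta=k/2$, and, by the inductive hypothesis applied to $G-v$, that $G-v$ is a disjoint union of copies of $K_{k+1}$. But then $v$ (which has $k/2\ge1$ neighbours) has a neighbour $a$ lying in some copy $Q\cong K_{k+1}$, and the walk starting $v,a$ and then traversing the remaining $k$ vertices of $Q$ is a $P_{k+2}$ — a contradiction. Therefore equality implies $\delta\ge t+1$, so every component has at most $k+1$ vertices; equality in $e(H)\le\binom{|H|}{2}\le k|H|/2$ then forces each component to be complete on exactly $k+1$ vertices, so $G$ is a disjoint union of copies of $K_{k+1}$. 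The converse is clear, since any such union is $P_{k+2}$-free and has exactly $kn/2$ edges.

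The bound itself is routine once the minimum-degree dichotomy and the path-length lemma are in hand, so I expect the main obstacle to be the equality characterization — specifically, ruling out the low-degree case under equality. The decisive observation is that the inductively forced clique structure of $G-v$, combined with even a single edge incident to $v$, already threads a path of $k+2$ vertices through one of the cliques; it is this interaction between the deleted vertex and the extremal structure of the rest, rather than any delicate counting, that closes the argument.
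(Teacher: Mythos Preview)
The paper does not prove this lemma; it is quoted as the classical Erd\H{o}s--Gallai theorem with a citation to \cite{ErGa59}. Your argument is correct and is essentially the standard proof: the minimum-degree dichotomy, the longest-path rotation lemma giving a path on $\min\{|H|,2\delta(H)+1\}$ vertices in each component, and the observation that equality forces the high-degree case and hence $K_{k+1}$-components. The one step worth stating a touch more carefully is the path-length lemma itself (your parenthetical sketch is right but compressed); once that is in hand, everything else goes through exactly as you wrote.
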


\begin{lemma}
\label{EGc} \cite{ErGa59}Let $k\geq2.$ If $G$ is a graph of order $n,$ with no
cycle longer than $k,$ then $e\left(  G\right)  \leq k\left(  n-1\right)  /2.$
\end{lemma}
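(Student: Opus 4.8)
The plan is to prove this classical circumference bound by induction on $n$, reducing to the $2$-connected case and then invoking a Dirac-type lower bound on the circumference in terms of the minimum degree. The base cases $n\le 2$ are immediate, since $e(G)\le 1\le k(n-1)/2$ for $k\ge 2$. For the inductive step I would split according to whether $G$ is $2$-connected, treating the non-$2$-connected case by cutting the graph apart and the $2$-connected case by deleting a low-degree vertex.

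First, suppose $G$ is not $2$-connected. If $G$ is disconnected, I would apply the induction hypothesis to each component and sum the bounds; since there are at least two components this only improves the estimate. If $G$ is connected but has a cut vertex, I would write $G=G_1\cup G_2$ with $V(G_1)\cap V(G_2)=\{w\}$ a single cut vertex and both $G_i$ proper, so that $|V(G_1)|+|V(G_2)|=n+1$ and $e(G)=e(G_1)+e(G_2)$. Each $G_i$ has no cycle longer than $k$ and fewer than $n$ vertices, so the induction hypothesis gives $e(G_i)\le k(|V(G_i)|-1)/2$, and summing yields $e(G)\le k(|V(G_1)|+|V(G_2)|-2)/2=k(n-1)/2$. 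This is the routine part: the point is simply that the target quantity $k(n-1)/2$ is additive across cut vertices.

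It then remains to treat the case where $G$ is $2$-connected, so $n\ge 3$. If $n\le k$ the bound is trivial, since $e(G)\le\binom{n}{2}=n(n-1)/2\le k(n-1)/2$. If $n>k$, this is where the heart of the argument lies: because $G$ is $2$-connected, Dirac's theorem guarantees a cycle of length at least $\min(2\delta(G),n)$. As $G$ has no cycle longer than $k<n$, this minimum cannot equal $n$, so it must equal $2\delta(G)$, forcing $2\delta(G)\le k$, i.e. $\delta(G)\le k/2$. Choosing a vertex $v$ of minimum degree and deleting it, the graph $G-v$ still has no cycle longer than $k$ and has $n-1$ vertices, so the induction hypothesis gives $e(G-v)\le k(n-2)/2$; hence $e(G)=e(G-v)+d(v)\le k(n-2)/2+k/2=k(n-1)/2$, closing the induction.

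The main obstacle is exactly the $2$-connected subcase with $n>k$, and specifically the Dirac-type estimate that a $2$-connected graph contains a cycle of length at least $\min(2\delta,n)$. This is essential rather than cosmetic: the elementary fact that any graph contains a cycle of length at least $\delta+1$ would contradict the hypothesis only for very small $k$, and a disjoint union of small cliques shows that large minimum degree alone does not force a long cycle. It is precisely $2$-connectivity that upgrades the guaranteed circumference from $\delta+1$ to $2\delta$, which is what makes the constant $k(n-1)/2$ sharp, attained by graphs all of whose blocks are copies of $K_k$. I would either cite Dirac's theorem or, to keep the argument self-contained, include its short proof via the rotation of a longest path/cycle. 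I would not try to route through the companion path bound (Lemma~\ref{EGp}): trees show that forbidding long cycles does not forbid long paths, so the cycle bound genuinely requires the connectivity-based circumference estimate rather than the path estimate.
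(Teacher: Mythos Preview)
Your argument is correct and is essentially the classical Erd\H{o}s--Gallai proof of this circumference bound. Note, however, that the paper does not prove Lemma~\ref{EGc} at all: it is simply quoted from \cite{ErGa59} as a known result, so there is no ``paper's own proof'' to compare against. Your write-up would serve perfectly well as a self-contained proof if one were desired.
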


Let $c(G)$ denote the circumference, i.e., the size of a longest cycle of $G.$
The following result is one case of Dirac theorem (see \cite{Dirac}).

\begin{lemma}
\label{Circumference} Let $G$ be a graph with $\delta\left(  G\right)  \geq2.$
Then $c(G)\geq\delta\left(  G\right)  +1$ holds.
\end{lemma}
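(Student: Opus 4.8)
The plan is to extract the required long cycle from a longest path of $G$, using the standard endpoint argument that underlies Dirac's theorem. First I would fix a longest path $P=v_0v_1\cdots v_\ell$ in $G$ and focus on its endpoint $v_0$. Because $P$ cannot be extended, every neighbour of $v_0$ must already lie on $P$; otherwise appending such a neighbour would produce a strictly longer path, contradicting the maximality of $P$. Hence all of the at least $\delta(G)$ neighbours of $v_0$ occur among $v_1,\dots,v_\ell$, and in particular $\ell\ge\delta(G)$.

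Next I would let $j$ be the largest index with $v_0v_j\in E(G)$. Since $v_0$ has at least $\delta(G)$ neighbours, sitting at distinct positions in $\{1,\dots,\ell\}$, and all these positions are at most $j$, the maximum $j$ is the largest of at least $\delta(G)$ distinct positive integers and therefore satisfies $j\ge\delta(G)$. The edge $v_0v_j$ together with the subpath $v_0v_1\cdots v_j$ then closes into a cycle $C=v_0v_1\cdots v_jv_0$ of length $j+1\ge\delta(G)+1$, which immediately gives $c(G)\ge\delta(G)+1$, as claimed.

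The hypothesis $\delta(G)\ge 2$ enters precisely to guarantee that $C$ is a genuine simple cycle: it forces $j\ge 2$, so $C$ has length at least $3$ rather than degenerating into a single repeated edge. The only point demanding a little care is the counting step yielding $j\ge\delta(G)$; everything else follows directly from the maximality of $P$. I do not expect a serious obstacle here, since the argument is purely combinatorial and entirely local to the endpoint $v_0$ of the longest path, with no global structure of $G$ needed.
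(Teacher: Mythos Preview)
Your argument is correct: the longest-path endpoint trick is exactly the standard proof of this special case of Dirac's theorem, and your counting step $j\ge\delta(G)$ is fine since the $\delta(G)$ neighbours of $v_0$ occupy distinct indices in $\{1,\dots,\ell\}$, all at most $j$.

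Note, however, that the paper does not actually supply a proof of this lemma; it simply quotes it as a known consequence of Dirac's theorem with a reference to \cite{Dirac}. So there is no ``paper's own proof'' to compare against---you have filled in a gap that the authors deliberately left to the literature, and your proof is the classical one that appears in most graph-theory textbooks.
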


To state the next result set $L_{t,k}:=K_{1}\vee tK_{k},$ i.e., $L_{t,k}$
consists of $t$ complete graphs of order $k+1,$ all sharing a single common
vertex. In \cite{AlSt96}, Ali and Staton gave the following stability result.

\begin{lemma}
\label{AS}Let $k\geq1,$ $n\geq2k+1,$ $G$ be a graph of order $n,$ and
$\delta\left(  G\right)  \geq k.$ If $G$ is connected, then $P_{2k+2}\subseteq
G,$ unless $G\subseteq S_{n,k},$ or $n=tk+1$ and $G=L_{t,k}$.
\end{lemma}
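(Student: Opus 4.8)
The plan is to analyse a longest path together with P\'osa-type rotations. Assume $G$ is connected, $\delta\left(G\right)\ge k$, $\nu\left(G\right)=n\ge 2k+1$, and $G$ contains no $P_{2k+2}$; I want to force $G\subseteq S_{n,k}$, or $n=tk+1$ and $G=L_{t,k}$. Fix a longest path $P=v_0v_1\cdots v_\ell$. Since $P$ is longest, $\Gamma\left(v_0\right)\cup\Gamma\left(v_\ell\right)\subseteq V\left(P\right)$, and since $G$ has no $P_{2k+2}$ we have $\ell\le 2k$; combined with $d\left(v_0\right),d\left(v_\ell\right)\ge k$ this gives $k\le\ell\le 2k$. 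One may also seed the cycle structure with Lemma~\ref{Circumference}, which already supplies a cycle of length $\ge k+1$.

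The first key step is a crossing/closing argument. Writing $A=\{i:v_0v_i\in E\left(G\right)\}$ and $B=\{i:v_iv_\ell\in E\left(G\right)\}$, suppose there is an index $i$ with $i+1\in A$ and $i\in B$. Then $v_0v_1\cdots v_i v_\ell v_{\ell-1}\cdots v_{i+1}v_0$ is a cycle $C$ through all $\ell+1$ vertices of $P$. If $V\left(C\right)\ne V\left(G\right)$, connectivity supplies an edge from $C$ to an outside vertex, yielding a path on $\ell+2$ vertices and contradicting maximality of $P$ (and when $\ell=2k$ this path is exactly a forbidden $P_{2k+2}$). Hence in this case $V\left(C\right)=V\left(G\right)$, forcing $n=\ell+1\le 2k+1$, i.e. the degenerate boundary order $n=2k+1$, which I would check by hand. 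In the principal case I therefore assume no such $i$ exists: then $A-1:=\{j-1:j\in A\}$ and $B$ are disjoint subsets of $\{0,\dots,\ell-1\}$, each of size $\ge k$, inside a ground set of size $\ell\le 2k$. This squeezes everything: $\ell=2k$, $d\left(v_0\right)=d\left(v_\ell\right)=k$, all their neighbours lie on $P$, and $A-1$ and $B$ partition $\{0,\dots,2k-1\}$. In particular the longest path has exactly $2k+1$ vertices, matching the longest paths in $S_{n,k}$ and in $L_{t,k}$.

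With this rigidity in hand, the second step is to locate a fixed core and read off the global structure. Iterating rotations from each end (a neighbour $v_i$ of an endpoint produces, via $v_{i-1}v_{i-2}\cdots v_0v_iv_{i+1}\cdots v_\ell$, a new longest path with the same vertex set and a new endpoint) yields a set of admissible endpoints. For $n>2k+1$ the crossing argument of the previous paragraph applies to every one of these rotated paths, so each is crossing-free and all their endpoint neighbourhoods are confined to the same small window of $P$. Using this together with maximality applied to each off-path vertex $u$ (any $u\notin V\left(P\right)$ has its $\ge k$ neighbours on $P$, but cannot be adjacent to $v_0$, to $v_\ell$, or to a vertex permitting an extension), I would extract a set $W$ with $|W|\le k$ that meets every longest path in a controlled way and dominates $G$.

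The final step is the dichotomy separating the two exceptional families. Examine $G-W$: if $G-W$ is edgeless, then every vertex outside $W$ is joined to a fixed $k$-subset of $W$ and $G\subseteq K_k\vee\overline{K}_{n-k}=S_{n,k}$; if $G-W$ contains an edge, then tracing a longest path through that edge and the core forces $W$ to be a single vertex and each component of $G-W$ to be a $K_k$ hung on it, giving $G=K_1\vee tK_k=L_{t,k}$ with $n=tk+1$. The main obstacle is precisely this structural bookkeeping: converting the rigid no-crossing data and the rotation-generated endpoint set into the statement that a single dominating set of size at most $k$ controls the whole graph, and then certifying that the only $P_{2k+2}$-free, connected, minimum-degree-$k$ completions around such a core are the clique-join $S_{n,k}$ and the shared-hub union $L_{t,k}$. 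The degenerate orders near $n=2k+1$, where complete-graph-like examples live, also require separate verification.
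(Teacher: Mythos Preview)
The paper does not prove this lemma: it is quoted verbatim as a result of Ali and Staton \cite{AlSt96} and used as a black box, so there is no in-paper argument to compare your attempt against.

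Taken on its own, your outline follows the standard longest-path/P\'osa-rotation route to such stability results, and the opening moves are correct: the crossing argument is right, and in the no-crossing case the disjointness of $A-1$ and $B$ inside $\{0,\dots,\ell-1\}$ does force $\ell=2k$ and $d(v_0)=d(v_\ell)=k$. But from that point on your proposal is, as you yourself say, only a plan. The extraction of a dominating core $W$ with $|W|\le k$ from the rotation data, the claim that every off-path vertex sends all of its neighbours into $W$, and especially the dichotomy step (``an edge in $G-W$ forces $|W|=1$'') are asserted rather than argued; this bookkeeping is exactly where the content of the Ali--Staton proof lies, and it is not routine. Your boundary remark also hides a real issue: for $n=2k+1$ the graph $K_{2k+1}$ is connected with $\delta=2k\ge k$ and trivially contains no $P_{2k+2}$, yet it is neither a subgraph of $S_{2k+1,k}=K_k\vee\overline{K}_{k+1}$ (whose clique number is only $k+1$) nor equal to any $L_{t,k}$, so the statement as quoted here cannot hold at $n=2k+1$ without an additional hypothesis, and your ``check by hand'' would run into this.
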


For the proof we also need the following two upper bounds on $q\left(
G\right)  .$

\begin{lemma}
\label{Merris bound} \cite{Mer98}For every graph $G,$ we have
\[
q\left(  G\right)  \leq\max_{u\in V\left(  G\right)  }\left\{  d\left(
u\right)  +\frac{1}{d\left(  u\right)  }\sum_{v\in\Gamma\left(  u\right)
}d\left(  v\right)  \right\}  .
\]

\end{lemma}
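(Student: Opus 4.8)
The plan is to prove this classical bound of Merris by a diagonal similarity transformation that replaces the symmetric signless Laplacian $Q(G)$ by a non-symmetric but entrywise nonnegative matrix whose maximal row sum is exactly the quantity on the right-hand side, and then to invoke the elementary fact that the spectral radius of a nonnegative matrix never exceeds its largest row sum. The whole argument is essentially a single matrix computation once the right similarity is chosen.

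First I would record that $Q(G)=D+A$, where $D=\operatorname{diag}(d(u))_{u\in V(G)}$ is the diagonal degree matrix and $A$ is the adjacency matrix. Writing $B$ for the vertex--edge incidence matrix of $G$, one has $Q(G)=BB^{\top}$, so $Q(G)$ is symmetric and positive semidefinite; all of its eigenvalues are therefore real and nonnegative, and its largest eigenvalue $q(G)$ coincides with its spectral radius $\rho(Q(G))$. If $G$ has no edges then $q(G)=0$ and the inequality is trivial, so I may assume $G$ has at least one edge and hence $q(G)>0$.

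Next, after discarding the isolated vertices of $G$ — each contributes a $1\times1$ zero block to $Q(G)$ and so affects neither the positive number $q(G)$ nor the maximum on the right-hand side, and the expression in that maximum is in any case only defined when $d(u)>0$ — the degree matrix $D$ becomes invertible. I would then pass to the similar matrix
\[
M:=D^{-1}Q(G)D=D+D^{-1}AD ,
\]
which has the same spectrum as $Q(G)$, so that $\rho(M)=q(G)$. A direct computation of its entries gives $M_{uu}=d(u)$ and, for every edge $uv$, $M_{uv}=d(v)/d(u)$, with all remaining entries equal to zero. In particular $M$ is entrywise nonnegative, and the sum of the entries in its $u$-th row is
\[
\sum_{v}M_{uv}=d(u)+\frac{1}{d(u)}\sum_{v\in\Gamma(u)}d(v) .
\]

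Finally I would apply the standard estimate $\rho(M)\le\|M\|_{\infty}=\max_{u}\sum_{v}M_{uv}$, which holds because the spectral radius of any matrix is bounded by every induced operator norm and $M\ge 0$ forces the absolute row sums to equal the row sums. Combining this with $\rho(M)=q(G)$ and the row-sum formula above yields precisely the asserted inequality. The computation itself is routine; the only point that genuinely needs care — and the step I would treat most explicitly — is the bookkeeping around vanishing degrees, namely verifying that deleting isolated vertices is harmless, so that the similarity transform by $D^{-1}$ is legitimate and the maximum is taken over exactly those vertices for which the displayed expression is defined.
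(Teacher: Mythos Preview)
The paper does not prove this lemma at all; it simply quotes it from \cite{Mer98} as a known tool. Your argument is correct and is essentially the classical proof: conjugating $Q(G)=D+A$ by the diagonal degree matrix yields the nonnegative matrix $M=D+D^{-1}AD$ whose row sums are precisely $d(u)+\frac{1}{d(u)}\sum_{v\in\Gamma(u)}d(v)$, and then $q(G)=\rho(M)\le\|M\|_\infty$ gives the bound. Your remark about first removing isolated vertices so that $D$ is invertible (and so that the maximum is taken only over vertices where the expression makes sense) is exactly the one point that needs care, and you handle it correctly.
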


\begin{lemma}
\label{Das} \cite{Das04}If $G$ is a graph with $n$ vertices and $m$ edges,
then
\[
q\left(  G\right)  \leq\frac{2m}{n-1}+n-2.
\]

\end{lemma}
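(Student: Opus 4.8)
The plan is to obtain the bound directly from Merris's inequality (Lemma~\ref{Merris bound}), after which the whole statement collapses to a purely combinatorial estimate on the degree sequence. By Lemma~\ref{Merris bound},
\[
q(G)\le\max_{u}\Big(d(u)+\frac{1}{d(u)}\sum_{v\in\Gamma(u)}d(v)\Big),
\]
the maximum being over vertices of positive degree. Hence it suffices to prove that for every such vertex $u$, writing $d:=d(u)$ and $S:=\sum_{v\in\Gamma(u)}d(v)$,
\[
d+\frac{S}{d}\le\frac{2m}{n-1}+n-2 .
\]
Multiplying through by the positive factor $d$, this is equivalent to $S\le\frac{2md}{n-1}+d(n-2)-d^{2}$.

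To handle it I would pass to the complementary degrees $\overline{d}(v):=n-1-d(v)\ge0$. Since every neighbour of $u$ is counted, $S=d(n-1)-\sum_{v\in\Gamma(u)}\overline{d}(v)$, and a short rearrangement turns the target into
\[
\sum_{v\in\Gamma(u)}\overline{d}(v)\ge d\Big(1+d-\frac{2m}{n-1}\Big).
\]
When $2m\ge(d+1)(n-1)$ the right-hand side is nonpositive and there is nothing to prove. In the complementary range I would use the elementary estimate $S\le 2m-d$, valid because $\sum_{w\notin\Gamma(u)}d(w)\ge d(u)=d$ (the vertex $u$ itself is a non-neighbour of $u$) together with $S=2m-\sum_{w\notin\Gamma(u)}d(w)$; this gives $\sum_{v\in\Gamma(u)}\overline{d}(v)=d(n-1)-S\ge dn-2m$. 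A direct computation then reduces the desired inequality to $(n-1-d)\big(d-\tfrac{2m}{n-1}\big)\ge0$, which closes the argument whenever $d\ge\frac{2m}{n-1}$; the boundary $d=n-1$ forces equality and corresponds exactly to a dominating vertex, consistent with the extremal behaviour of $S_{n,k}$.

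The main obstacle is the narrow remaining band $\frac{2m}{n-1}-1<d<\frac{2m}{n-1}$. There the crude bound $\sum_{w\notin\Gamma(u)}d(w)\ge d$ (equivalently $\sum_{v\in\Gamma(u)}\overline{d}(v)\ge dn-2m$) is too weak, since $dn-2m$ may even be negative. The point to exploit is that lying in this band forces $m$ to be comparatively large relative to $d$, so the non-neighbours of $u$ cannot all be isolated or of very small degree; quantifying how the edge budget must be spread out over $V(G)\setminus\Gamma(u)$ supplies the extra slack in $\sum_{w\notin\Gamma(u)}d(w)$ that is needed. I expect this intermediate regime — converting the global edge count $m$ into a usable lower bound for $\sum_{w\notin\Gamma(u)}d(w)$ when $d$ is close to $\frac{2m}{n-1}$ — to be the crux of the estimate; once it is settled, Lemma~\ref{Das} follows at once from Lemma~\ref{Merris bound}.
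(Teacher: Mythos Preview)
The paper does not supply a proof of this lemma; it is quoted directly from \cite{Das04}. There is therefore no in-paper argument to compare your attempt against.

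Your attempt is incomplete, and you say so yourself. After reducing to the pointwise estimate $\sum_{v\in\Gamma(u)}\overline{d}(v)\ge d\bigl(1+d-\tfrac{2m}{n-1}\bigr)$ and correctly disposing of the ranges $d\le\tfrac{2m}{n-1}-1$ and $d\ge\tfrac{2m}{n-1}$, you leave the band $\tfrac{2m}{n-1}-1<d<\tfrac{2m}{n-1}$ open, offering only the heuristic that the non-neighbours of $u$ ``cannot all be isolated or of very small degree''. That is the genuine gap: the crude bound $\sum_{w\notin\Gamma(u)}d(w)\ge d$ is indeed too weak here, and you have not replaced it by anything quantitative. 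The hole can be closed along the lines you suggest, but a specific inequality is required. Writing $W=V(G)\setminus(\Gamma(u)\cup\{u\})$ and $T=\sum_{w\in W}d(w)$, the edge decomposition $m=e(\Gamma(u))+d+e(\Gamma(u),W)+e(W)$ gives
\[
T=e(\Gamma(u),W)+2e(W)\ \ge\ m-d-e(\Gamma(u))\ \ge\ m-\binom{d+1}{2}.
\]
Your target inequality is equivalent to $T(n-1)\ge(n-1-d)\bigl(2m-d(n-1)\bigr)$; substituting the bound above and setting $x:=2m-d(n-1)\in(0,n-1)$, one is reduced (after a case split on the sign of $n-1-2d$) to the elementary fact $d(n-d)\ge n-1$ for $1\le d\le n-1$. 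With this step inserted your route through Lemma~\ref{Merris bound} does establish Lemma~\ref{Das}; without it the argument is unfinished.
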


Now we will begin our analysis. In \cite{Nik201310} (see Proposition 2 of
\cite{Nik201310}), it was pointed out that when $k\geq2$ and $n>5k^{2},$
\begin{equation}
q\left(  S_{n,k}\right)  >n+2k-2-\frac{2k\left(  k-1\right)  }{n+2k-3}>n+2k-3.
\label{lb}%
\end{equation}
Then for a graph $G$ with $q(G)\geq q\left(  S_{n,k}\right)  ,$ we have
\[
n+2k-2-\frac{2k\left(  k-1\right)  }{n+2k-3}<q\left(  S_{n,k}\right)  \leq
q(G)\leq\frac{2e(G)}{n-1}+n-2,
\]
which implies
\[
2e(G)>2k\left(  n-1\right)  -2k\left(  k-1\right)  +\frac{4k\left(
k-1\right)  ^{2}}{n+2k-3}.
\]
Noting that $2e(G)$ is even, $2k\left(  n-1\right)  -2k\left(  k-1\right)  $
is also even, and $0<\frac{4k\left(  k-1\right)  ^{2}}{n+2k-3}<1,$ and then we
have
\[
2e(G)\geq2k\left(  n-1\right)  -2k\left(  k-1\right)  +2,
\]
i.e.,%
\begin{equation}
e(G)\geq kn-k^{2}+1. \label{edge of G}%
\end{equation}

Given a graph $G,$ write $G\left[  X\right]  $ for the graph induced by $X,$
when $X$ $\subseteq$ $V\left(  G\right)  $, and $e\left(  X\right)  $ for the
number of edges in graph $G\left[  X\right]  ,$ and $e\left(  X,Y\right)  $
for the number of edges joining vertices in $X$ to vertices in $Y,$ where $X$
and $Y$ are disjoint sets of $V\left(  G\right)  $. For a vertex $u\in
V\left(  G\right)  ,$ the following fact was pointed out in \cite{FNP13}.
\[
\sum_{v\in\Gamma\left(  u\right)  }d\left(  v\right)  =2e\left(  \Gamma\left(
u\right)  \right)  +e\left(  \Gamma\left(  u\right)  ,V\left(  G\right)
\backslash\Gamma\left(  u\right)  \right)  ,
\]
which is crucial in the proof of Lemma \ref{MxD}.

\begin{lemma}
\label{MxD}Let $k\geq2,$ $n\geq8k^{2},$ and let $G$ be a graph of order $n.$
If $G$ has no $C_{2k+1}$ and $q\left(  G\right)  \geq q\left(  S_{n,k}\right)
,$ then $\Delta(G)=n-1.$
\end{lemma}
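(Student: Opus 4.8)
=== PROOF PROPOSAL ===

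The goal is to show that under the stated hypotheses the extremal graph $G$ has a vertex of full degree $n-1$. The plan is to proceed by contradiction: assume $\Delta(G)\leq n-2$, and derive a contradiction with the lower bound on $e(G)$ obtained in \eqref{edge of G}. The main tool will be Lemma \ref{Merris bound}, which bounds $q(G)$ in terms of the quantity $d(u)+\frac{1}{d(u)}\sum_{v\in\Gamma(u)}d(v)$, combined with the identity $\sum_{v\in\Gamma(u)}d(v)=2e(\Gamma(u))+e(\Gamma(u),V(G)\backslash\Gamma(u))$ that the excerpt singles out just before the statement.

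First I would fix a vertex $u$ attaining $q(G)$ in the sense of the Merris bound, and estimate the two terms $e(\Gamma(u))$ and $e(\Gamma(u),V(G)\backslash\Gamma(u))$ separately. The key structural input is the forbidden $C_{2k+1}$: I expect that the absence of an odd cycle of length $2k+1$, via the Erd\H{o}s--Gallai--type bounds in Lemma \ref{EGp} and Lemma \ref{EGc} (and possibly the circumference bound in Lemma \ref{Circumference}), forces $e(\Gamma(u))$ to be small relative to $d(u)$. Concretely, the neighborhood $\Gamma(u)$ cannot contain long paths or cycles without completing a $C_{2k+1}$ through $u$, so $G[\Gamma(u)]$ has few edges, of order roughly $(k-1)d(u)$ at most. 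The cross term $e(\Gamma(u),V\backslash\Gamma(u))$ is bounded crudely by $d(u)(n-d(u))$ or by the total edge count minus the edges inside $\Gamma(u)$. Assembling these into the Merris bound gives $q(G)\leq d(u)+\frac{1}{d(u)}\big(2e(\Gamma(u))+e(\Gamma(u),V\backslash\Gamma(u))\big)$, which I would compare against the lower bound $q(G)\geq q(S_{n,k})>n+2k-3$ from \eqref{lb}.

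The delicate step is the case analysis on $d(u)$. If $d(u)$ is close to $n-1$ but not equal, one must rule it out; if $d(u)$ is small, the Merris bound is automatically too weak to reach $n+2k-3$, giving a contradiction directly. I would split into the regime where $\Delta(G)$ is large (say $\Delta(G)\geq n-1-\text{something}$) versus moderate, and in the large regime exploit that a vertex of degree $n-2$ misses only one vertex, so $e(\Gamma(u),V\backslash\Gamma(u))$ is governed by the degree of that single missed vertex plus at most one edge. The hypothesis $n\geq 8k^{2}$ is what makes all these estimates close with room to spare, absorbing the $O(k^2)$ error terms against the linear-in-$n$ target.

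The main obstacle I anticipate is bounding $e(\Gamma(u))$ sharply enough using only the $C_{2k+1}$-freeness. The subtlety is that forbidding a single odd cycle length does not forbid all long paths in $\Gamma(u)$, so one cannot directly apply Lemma \ref{EGp} to $G[\Gamma(u)]$; instead one must argue that a suitable path or cycle structure inside $\Gamma(u)$ combines with $u$ to produce exactly a $C_{2k+1}$, which requires controlling parity and exact lengths. I expect the proof to isolate a path of length $2k-1$ in $\Gamma(u)$ (whose endpoints, joined through $u$, would close a $C_{2k+1}$) and use Lemma \ref{EGp} with the parameter tuned to $k=2k-2$ to bound the edge count, then verify the resulting inequality contradicts \eqref{edge of G}.
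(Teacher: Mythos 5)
Your starting point coincides with the paper's: apply Lemma \ref{Merris bound} at the optimal vertex $w$, write $\sum_{v\in\Gamma(w)}d(v)=2e(\Gamma(w))+e(\Gamma(w),V(G)\backslash\Gamma(w))$, and observe that $C_{2k+1}$-freeness forbids a $P_{2k}$ inside $\Gamma(w)$ (both endpoints would close the cycle through $w$), so Lemma \ref{EGp} gives $2e(\Gamma(w))\leq(2k-2)|\Gamma(w)|$. Incidentally, your stated worry that one ``cannot directly apply Lemma \ref{EGp} to $G[\Gamma(u)]$'' is unfounded --- this step is immediate and is exactly what the paper does. The genuine gap is quantitative and lies one step further: with $A=\Gamma(w)$ and $B=V(G)\backslash(A\cup\{w\})$, the crude estimates $2e(A)\leq(2k-2)|A|$ and $e(A,B)\leq|A||B|$ yield only $q(G)\leq|A|+1+(2k-2)+|B|=n+2k-2$, while $q(S_{n,k})$ itself lies strictly between $n+2k-3$ and $n+2k-2$. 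So there is no contradiction; the slack you must extract is of order $k^2/n$, and nothing in your proposal produces it. Your fallback of splitting on whether $d(w)$ is close to $n-1$ does not help, since $d(w)$ could be, say, $n-10$, where ``the missed vertices'' contribute too little to matter under your crude cross-term bound.

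The paper closes this hair-thin gap with a three-stage refinement that your proposal does not contain. First, no vertex of $B$ can be joined to all of $A$ (else $G[A]$ would even be $P_{2k-1}$-free, forcing $q(S_{n,k})\leq n+2k-3$, absurd), so $e(A,B)\leq|B|(|A|-1)$; feeding this saving of $|B|/|A|$ back into the Merris bound and comparing with \eqref{lb} forces $|B|<2k^2$. Second, letting $A'$ be the vertices of $A$ adjacent to \emph{all} of $B$, a similar comparison forces $|A'|>|A|-2k^2$. Third --- and this is the step with real content --- the components of $G[A]$ meeting $A'$ contain no cycle longer than $2k-3$: a $C_s$ with $s\geq 2k-2$ would yield a $P_{2k-2}$ in $G[A]$ ending in $A'$, extendable through a vertex of $B$ and back into $A'$ to a $P_{2k}$ with both ends in $A$, hence a $C_{2k+1}$ through $w$. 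Lemma \ref{EGc} then improves the edge bound to $2e(A)<(2k-3)|A|+2k^2$, and only this improved coefficient $2k-3$ drives the Merris bound down to $n+2k-3+2k^2/|A|$, which contradicts \eqref{lb} once $n\geq 8k^2$. Without some version of this bootstrapping (or another mechanism for beating $n+2k-2$ by a $\Theta(k^2/n)$ margin), the argument as you outline it cannot conclude.
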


\begin{proof}
Let $w$ be a vertex of $G$ such that
\[
d\left(  w\right)  +\frac{1}{d\left(  w\right)  }\sum_{i\in\Gamma\left(
w\right)  }d\left(  i\right)  =\max_{u\in V\left(  G\right)  }\left\{
d\left(  u\right)  +\frac{1}{d\left(  u\right)  }\sum_{v\in\Gamma\left(
u\right)  }d\left(  v\right)  \right\}  .
\]
Lemma \ref{Merris bound} implies that
\[
q\left(  G\right)  \leq d\left(  w\right)  +\frac{1}{d\left(  w\right)  }%
\sum_{i\in\Gamma\left(  w\right)  }d\left(  i\right)  .
\]
We shall show that\ if $d\left(  w\right)  <n-1,$ then $q\left(  G\right)
<q\left(  S_{n,k}\right)  .$

Set $A=\Gamma\left(  w\right)  ,$ $B=V\left(  G\right)  \backslash\left(
\Gamma\left(  w\right)  \cup\left\{  w\right\}  \right)  ;$ the assumption
$d\left(  w\right)  <n-1$ implies that $B\neq\varnothing.$ Note that
$P_{2k}\nsubseteq G\left[  A\right]  ;$ hence Lemma \ref{EGp} implies that
\begin{equation}
2e\left(  A\right)  \leq\left(  2k-2\right)  \left\vert A\right\vert .
\label{ub}%
\end{equation}

Our first goal is to show that $\left\vert B\right\vert <2k^{2}.$ Indeed, we
see that
\begin{equation}
q\left(  S_{n,k}\right)  \leq q\left(  G\right)  \leq d\left(  w\right)
+\frac{1}{d\left(  w\right)  }\sum_{i\in\Gamma\left(  w\right)  }d\left(
i\right)  =\left\vert A\right\vert +1+\frac{2e\left(  A\right)  +e\left(
A,B\right)  }{\left\vert A\right\vert }. \label{b}%
\end{equation}
If a vertex of $B$ is joined to every vertex of $A,$ then $G\left[  A\right]
$ does not contain a $P_{2k-1},$ as otherwise $C_{2k+1}\subset G.$ Hence,
Lemma \ref{EGp} implies that $2e\left(  A\right)  \leq\left(  2k-3\right)
\left\vert A\right\vert $ and from (\ref{b}) we see that
\[
q\left(  S_{n,k}\right)  \leq\left\vert A\right\vert +1+\frac{\left(
2k-3\right)  \left\vert A\right\vert +\left\vert A\right\vert \left\vert
B\right\vert }{\left\vert A\right\vert }=\left\vert A\right\vert +1+\left\vert
B\right\vert +2k-3=n+2k-3<q\left(  S_{n,k}\right)  .
\]
This contradiction shows that no vertex of $B$ is joined to all vertices of
$A;$ hence%
\[
e\left(  A,B\right)  \leq\left\vert B\right\vert \left(  \left\vert
A\right\vert -1\right)  .
\]
Now, inequalities (\ref{ub}) and (\ref{b}) imply that
\[
q\left(  S_{n,k}\right)  \leq\left\vert A\right\vert +1+\frac{\left(
2k-2\right)  \left\vert A\right\vert +\left\vert B\right\vert \left(
\left\vert A\right\vert -1\right)  }{\left\vert A\right\vert }=n+2k-2-\frac
{\left\vert B\right\vert }{n-1-\left\vert B\right\vert }.
\]
Comparing this inequality to (\ref{lb}), after some algebra we find that
$\left\vert B\right\vert <2k^{2},$ as claimed.

Next, let $A^{\prime}$ be the set of all vertices in $A$ that are joined to
each vertex in $B.$ Our next goal is to show that $\left\vert A^{\prime
}\right\vert \geq\left\vert A\right\vert -2k^{2}.$ Indeed, obviously
\[
e\left(  A,B\right)  \leq\left\vert A^{\prime}\right\vert \left\vert
B\right\vert +\left(  \left\vert A\right\vert -\left\vert A^{\prime
}\right\vert \right)  \left(  \left\vert B\right\vert -1\right)  =\left\vert
A\right\vert \left\vert B\right\vert -\left\vert A\right\vert +\left\vert
A^{\prime}\right\vert .
\]
Hence inequalities (\ref{ub}) and (\ref{b}) imply that
\[
q\left(  S_{n,k}\right)  \leq\left\vert A\right\vert +1+\frac{\left(
2k-2\right)  \left\vert A\right\vert +\left\vert A\right\vert \left\vert
B\right\vert -\left\vert A\right\vert +\left\vert A^{\prime}\right\vert
}{\left\vert A\right\vert }=n+2k-3+\frac{\left\vert A^{\prime}\right\vert
}{\left\vert A\right\vert }%
\]
Comparing this inequality to (\ref{lb}), after some algebra we find that
$\left\vert A^{\prime}\right\vert >\left\vert A\right\vert -2k^{2},$ as claimed.

Finally, let $G_{1}$ be the union of all components of $G\left[  A\right]  $
that contain a vertex from $A^{\prime},$ and let $G_{2}$ be the union of the
remaining components of $G\left[  A\right]  .$ As before we see that $e\left(
G_{2}\right)  \leq\left(  k-1\right)  v\left(  G_{2}\right)  .$

We claim that $G_{1}$ contains no cycle longer than $2k-3.$ Indeed, if
$C_{s}\subset G_{1}$ for some $s\geq2k-2,$ then $s\leq2k-1,$ as otherwise
$P_{2k}\subset G\left(  A\right)  ,$ contradicting that $C_{2k+1}\nsubseteq
G.$ The definition of $G_{1}$ implies that there must be a vertex in
$A^{\prime}$ that either belongs to $C_{s}$ or can be joined to a vertex of
$C_{s}$ by a path contained in $G_{1}.$ In either of these cases we see that
$G_{1}$ contains a path $P=P_{2k-2}$ with an endvertex belonging to
$A^{\prime}.$ Since
\[
\left\vert A^{\prime}\right\vert \geq\left\vert A\right\vert -2k^{2}%
=n-1-\left\vert B\right\vert -2k^{2}>n-1-4k^{2}>2k-2,
\]
we can extend $P$ by a vertex in $B$ and a vertex in $A^{\prime},$ thus
obtaining a path $P_{2k}\subset G\left(  \left[  A\cup B\right]  \right)  $
with two endvertices belonging to $A.$ Therefore $C_{2k+1}\subset G,$ a
contradiction showing that $G_{1}$ contains no cycle longer than $2k-3.$
Hence, Lemma \ref{EGc} implies that
\[
2e\left(  G_{1}\right)  \leq\left(  2k-3\right)  v\left(  G_{1}\right)  ,
\]
and so%
\begin{align*}
2e\left(  A\right)   &  =2e\left(  G_{1}\right)  +2e\left(  G_{2}\right)
\leq\left(  2k-3\right)  v\left(  G_{1}\right)  +\left(  2k-2\right)  v\left(
G_{2}\right) \\
&  =\left(  2k-3\right)  \left(  \left\vert A\right\vert -v\left(
G_{2}\right)  \right)  +\left(  2k-2\right)  v\left(  G_{2}\right)  =\left(
2k-3\right)  \left\vert A\right\vert +v\left(  G_{2}\right) \\
&  \leq\left(  2k-3\right)  \left\vert A\right\vert +\left\vert A\right\vert
-\left\vert A^{\prime}\right\vert <\left(  2k-3\right)  \left\vert
A\right\vert +2k^{2}.
\end{align*}
This inequality, together with (\ref{b}), implies that
\[
q\left(  S_{n,k}\right)  \leq\left\vert A\right\vert +1+\frac{\left(
2k-3\right)  \left\vert A\right\vert +2k^{2}+\left\vert A\right\vert
\left\vert B\right\vert }{\left\vert A\right\vert }=n+2k-3+\frac{2k^{2}%
}{\left\vert A\right\vert }.
\]
Comparing this bound with (\ref{lb}), we find that%
\[
1-\frac{2k\left(  k-1\right)  }{n+2k-3}<\frac{2k^{2}}{\left\vert A\right\vert
}\leq\frac{2k^{2}}{n-4k^{2}},
\]
which is a contradiction for $n\geq8k^{2}.$ Therefore $B=\varnothing$ and
$d\left(  w\right)  =n-1,$ completing the proof of Lemma \ref{MxD}.
\end{proof}

Let $w$ be a vertex of graph $G.$ Then we write $G_{w}=G[V(G)\backslash\{w\}]$
for short. We would like to point out that the main idea of the following
lemma comes from the Proof of Lemma 7 of \cite{Nik201310}.

\begin{lemma}
\label{Removing} Let $k\geq3,$ $n\geq110k^{2},$ and let $G$ be a graph of
order $n.$ If $G_{w}=G_{1}\cup G_{2},$ where $V\left(  G_{1}\right)  \ $and
$V\left(  G_{2}\right)  $ are disjoint, $P_{2k}\nsubseteq G_{1},$ and
$1\leq\nu\left(  G_{2}\right)  \leq k^{2}+k-3,$ then $q\left(  G\right)
<q\left(  S_{n,k}\right)  .$
\end{lemma}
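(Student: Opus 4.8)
The plan is to argue by contradiction: suppose $q:=q(G)\ge q(S_{n,k})$, so by (\ref{lb}) we have $q>n+2k-3$. First I would dispose of the disconnected case: any component of $G$ avoiding $w$ lies entirely in $G_1$ (hence is $P_{2k}$-free, with $Q$-index below $n$ by Lemmas \ref{EGp} and \ref{Das}) or in $G_2$ (on at most $k^2+k-3$ vertices, so with $Q$-index at most $2(k^2+k-4)<n$); either way its $Q$-index is below $q(S_{n,k})$, so the component attaining $q$ must contain $w$, and I may assume $G$ is that component, hence connected. Let $x$ be the positive unit eigenvector of $Q(G)$ for $q$, write $n_i=\nu(G_i)$, and set $H=G[V(G_1)\cup\{w\}]$. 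Since $P_{2k}\nsubseteq G_1$, Lemma \ref{EGp} gives $e(G_1)\le(k-1)n_1$, so $e(H)\le kn_1$; as $\nu(H)=n_1+1=n-n_2$, Lemma \ref{Das} yields $q(H)\le 2k+n_1-1=n+2k-2-n_2$.

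Next I would show that $x$ is negligible on $G_2$. Let $M=\max_{v\in V(G_2)}x_v$. Each $v\in V(G_2)$ has all its neighbours in $V(G_2)\cup\{w\}$ and degree at most $n_2$, so the eigen-equation at the maximizer gives $(q-2n_2+1)M\le x_w$, i.e. $M\le x_w/(q-2n_2+1)$; since $q>n+2k-3$, $n_2\le k^2+k-3$ and $n\ge110k^2$, the denominator exceeds $108k^2$, so both $M$ and $s:=\sum_{v\in V(G_2)}x_v^2$ are tiny. The heart of the argument is a clean comparison coming from the eigen-identity on $G_2$: writing the Rayleigh sum $q=\sum_{uv\in E(G)}(x_u+x_v)^2$, splitting off the edges meeting $V(G_2)$, bounding the $E(H)$-part by $q(H)(1-s)$ via the Rayleigh inequality for $Q(H)$, and using the eigen-equations at the vertices of $G_2$ to evaluate the remaining terms, I expect to reach
\[
(q-q(H))(1-s)\le x_w\!\!\sum_{v\in\Gamma(w)\cap V(G_2)}\!\!x_v+|\Gamma(w)\cap V(G_2)|\,x_w^2\le n_2x_w^2\Big(1+\tfrac{1}{q-2n_2+1}\Big).
\]
Together with $q(H)\le n+2k-2-n_2$ this gives $q\le n+2k-2-n_2(1-x_w^2)+\varepsilon$, where $\varepsilon$ collects $n_2x_w^2/(q-2n_2+1)$ and the $s$-correction and is at most $n_2/(100k^2)$.

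It then remains to keep $x_w$ bounded away from $1$. From the eigen-equation at $w$, $\sum_{v\in\Gamma(w)}x_v=(q-d(w))x_w$, while Cauchy--Schwarz gives $\sum_{v\in\Gamma(w)}x_v\le\sqrt{d(w)}\,(1-x_w^2)^{1/2}$; since $d(w)\le n-1$ and $q-d(w)>2k-2$, squaring yields
\[
1-x_w^2>\frac{(q-d(w))^2}{(q-d(w))^2+d(w)}>\frac{4(k-1)^2}{4(k-1)^2+n-1}.
\]
Feeding this into the previous estimate (and using $n_2\ge1$ to minimize the right-hand side at $n_2=1$) I would obtain $n_2(1-x_w^2)-\varepsilon>\tfrac{2k(k-1)}{n+2k-3}$, whence $q<n+2k-2-\tfrac{2k(k-1)}{n+2k-3}<q(S_{n,k})$ by (\ref{lb}), contradicting $q\ge q(S_{n,k})$.

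The main obstacle is this final numerical step: the three competing quantities---the gain $n_2(1-x_w^2)$, the error $\varepsilon$, and the slack $2k(k-1)/(n+2k-3)$ in (\ref{lb})---are all of the same small order, so the inequality is genuinely tight. The delicate point is that the worst case is $n_2=1$ (a single pendant-type vertex), where the gain is smallest; checking that $\tfrac{4(k-1)^2}{4(k-1)^2+n-1}-\tfrac{1}{100k^2}>\tfrac{2k(k-1)}{n+2k-3}$ already at $k=3$, $n=110k^2$ is exactly what forces the hypothesis $n\ge110k^2$, and I expect most of the careful estimation to go into making $\varepsilon$ provably smaller than this margin uniformly over all admissible $n_2$.
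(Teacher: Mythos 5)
Your proposal is correct and follows essentially the same route as the paper: delete $G_2$, bound the $Q$-index of the remainder by $n+2k-\nu(G_2)-2$ via Lemmas \ref{EGp} and \ref{Das}, use the eigenequations to show the eigenvector is negligible on $G_2$ and that $x_w^2$ is bounded away from $1$ by roughly $4(k-1)^2/n$, and then win the final comparison against (\ref{lb}). The only differences are cosmetic --- the paper first adds edges to make $w$ dominating and $G_2$ complete (which gives connectivity and the symmetry $x_v\equiv x$ on $G_2$ for free), whereas you keep $G_2$ general and treat disconnectedness by hand --- and the paper's explicit computation confirms that your ``delicate'' final numerical step does go through for $n\geq110k^2$.
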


\begin{proof}
Write $\nu\left(  G_{2}\right)  =t,$ then $1\leq t\leq k^{2}+k-3.$ Assume for
a contradiction that $q\left(  G\right)  \geq q\left(  S_{n,k}\right)  .$ We
may suppose that $w$ is a dominating vertex of $G$, and $G_{2}$ is isomorphic
to $K_{t},$ otherwise, we may add some edges to $G,$ while $q\left(  G\right)
$ will not decrease. Denote by $G_{0}$ the the graph obtained from $G$ by
removing $G_{2}.$ In view of $P_{2k}\nsubseteq G_{1},$ then by Lemma \ref{EGp}
we have
\[
e\left(  G_{1}\right)  \leq\left(  k-1\right)  \left(  n-t-1\right)  ,
\]
and then
\[
e\left(  G_{0}\right)  =e\left(  G_{1}\right)  +n-t-1\leq k\left(
n-t-1\right)  .
\]
Lemma \ref{Das} implies that
\[
q\left(  G_{0}\right)  \leq\frac{2k\left(  n-t-1\right)  }{n-t-1}%
+n-t-2=n+2k-t-2.
\]
Let $\left(  x_{1},\ldots,x_{n}\right)  ^{T}$ be a unit eigenvector to
$q\left(  G\right)  .$ By symmetry the entries corresponding to vertices of
$G_{2}$ have the same value, say $x$. From the eigenequations for $Q\left(
G\right)  $ we see that%

\[
\left(  q\left(  G\right)  -n+1\right)  x_{w}=\sum_{i\in V\left(  G\right)
\backslash\left\{  w\right\}  }x_{i}\leq\sqrt{\left(  n-1\right)  \left(
1-x_{w}^{2}\right)  },
\]
and noting that
\[
q\left(  G\right)  \geq q\left(  S_{n,k}\right)  >n+2k-3,
\]
so
\begin{equation}
x_{w}^{2}\leq\frac{n-1}{\left(  q\left(  G\right)  -n+1\right)  ^{2}+n-1}%
\leq\frac{n-1}{n-1+4\left(  k-1\right)  ^{2}}<1-\frac{4\left(  k-1\right)
^{2}}{n+4k^{2}}.\label{x_w}%
\end{equation}
Also from the eigenequations for $Q\left(  G\right)  $ we have
\[
x=\frac{x_{w}}{q\left(  G\right)  -2t+1}.
\]
Note that $q\left(  G\right)  >n+2k-3$ and $t\leq k^{2}+k-3,$ we have
\begin{equation}
x\leq\frac{x_{w}}{n+2k-2t-2}\leq\frac{x_{w}}{n-2k^{2}+4}.\label{x}%
\end{equation}
When $t\geq1,$ $n\geq110k^{2},$ by using (\ref{x_w}) and (\ref{x}) we find
that
\begin{align*}
q\left(  G\right)   &  =\sum_{ij\in E\left(  G\right)  }\left(  x_{i}%
+x_{j}\right)  ^{2}=\sum_{ij\in E\left(  G_{0}\right)  }\left(  x_{i}%
+x_{j}\right)  ^{2}+t\left(  x+x_{w}\right)  ^{2}+\sum_{ij\in E\left(
G_{2}\right)  }\left(  x_{i}+x_{j}\right)  ^{2}\\
&  \leq q\left(  G_{0}\right)  \left(  1-tx^{2}\right)  +t\left(
x+x_{w}\right)  ^{2}+2t\left(  t-1\right)  x^{2}\\
&  <n+2k-t-2+t\left(  1+\frac{1}{\left(  n-2k^{2}+4\right)  ^{2}}+\frac
{2}{n-2k^{2}+4}+\frac{2\left(  t-1\right)  }{\left(  n-2k^{2}+4\right)  ^{2}%
}\right)  x_{w}^{2}\\
&  <n+2k-t-2+t\left(  1+\frac{3}{n-2k^{2}+4}\right)  \left(  1-\frac{4\left(
k-1\right)  ^{2}}{n+4k^{2}}\right)  \\
&  <n+2k-2-\left(  \frac{4\left(  k-1\right)  ^{2}}{n+4k^{2}}-\frac
{3}{n-2k^{2}+4}\right)  \\
&  <n+2k-2-\frac{2k\left(  k-1\right)  }{n-2k^{2}+4}\\
&  <q(S_{n,k}).
\end{align*}

Therefore $q\left(  G\right)  <q(S_{n,k}),$ and this contradiction completes
the proof.
\end{proof}

We will call vertex $v$ a \emph{center vertex} of graph $S_{n,k},$ if
$d\left(  v\right)  =n-1$ holds.

\begin{lemma}
\label{one S} Let $G$ be a graph of order $n.$ If $G_{w}=\cup_{i=1}%
^{t}S_{n_{i},k-1},k\geq3,$ and $t\geq2$ hold, then we have $q\left(  G\right)
<q\left(  S_{n,k}\right)  .$
\end{lemma}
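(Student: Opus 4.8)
The plan is to bound $q(G)$ from above by a crude edge count and then contrast it with the lower bound (\ref{lb}) for $q(S_{n,k})$. The decisive structural observation is that each factor $S_{n_i,k-1}$ carries only $k-1$ dominating vertices, so a disjoint union of $t$ of them is comparatively sparse; the resulting deficit in the number of edges is large enough, once $t\geq 2$, to push $q(G)$ strictly below $q(S_{n,k})$ through Lemma \ref{Das}.

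First I would compute $e(G)$. Writing $\sum_{i=1}^{t}n_i=n-1$ and using $e(S_{n_i,k-1})=\binom{k-1}{2}+(k-1)(n_i-k+1)$, summation over $i$ collapses to $e(G_w)=(k-1)(n-1)-\tfrac{1}{2}tk(k-1)$. Since every edge of $G$ either lies in $G_w$ or is incident with $w$, and $w$ has at most $n-1$ neighbours, we obtain the unconditional bound $e(G)\le e(G_w)+(n-1)=k(n-1)-\tfrac{1}{2}tk(k-1)$.

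Next I would feed this into Lemma \ref{Das}. A direct substitution gives
\[
q(G)\le \frac{2e(G)}{n-1}+n-2\le n+2k-2-\frac{tk(k-1)}{n-1},
\]
and invoking the hypothesis $t\ge 2$ sharpens this to $q(G)\le n+2k-2-\dfrac{2k(k-1)}{n-1}$. On the other hand, in the large-$n$ regime where (\ref{lb}) applies we have $q(S_{n,k})>n+2k-2-\dfrac{2k(k-1)}{n+2k-3}$. Since $n-1<n+2k-3$ for $k\ge 2$, we get $\dfrac{2k(k-1)}{n-1}>\dfrac{2k(k-1)}{n+2k-3}$, whence
\[
q(G)\le n+2k-2-\frac{2k(k-1)}{n-1}<n+2k-2-\frac{2k(k-1)}{n+2k-3}<q(S_{n,k}),
\]
which is the asserted strict inequality.

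The step I expect to be the crux is arranging that the factor $t\ge 2$ acts in the right direction. With $t=1$ the coefficient in front of $k(k-1)$ would be only $1$, and then $\tfrac{k(k-1)}{n-1}$ is in fact \emph{smaller} than $\tfrac{2k(k-1)}{n+2k-3}$ for large $n$, so the comparison would reverse --- exactly as it must, since the $t=1$ configuration with $w$ dominating is $S_{n,k}$ itself. Thus the entire argument hinges on the extra sparsity gained from having at least two components, and the technical care lies in verifying the edge-count identity for $e(G_w)$ and confirming that $n\ge 110k^2$ comfortably secures both the inequality $n-1<n+2k-3$ and the applicability of (\ref{lb}).
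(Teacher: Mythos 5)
Your argument is correct, and it is genuinely different from the paper's. The paper proves this lemma by a perturbation of the Rayleigh quotient: it merges two components $S_{n_1,k-1}$ and $S_{n_2,k-1}$ into a single $S_{n_1+n_2,k-1}$ (assuming without loss of generality $x_{u_1}\geq x_{v_1}$ for the Perron vector $\mathbf{x}$), checks that $\mathbf{x}^{T}Q(G')\mathbf{x}-\mathbf{x}^{T}Q(G)\mathbf{x}>0$ because the $(k-1)^2$ new clique-to-clique edges outweigh the $\binom{k-1}{2}$ deleted ones, and then inducts on $t$; this yields $q(G)<q(G')=q(S_{n,k})$ with no restriction on $n$ whatsoever. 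You instead count edges: your identity $e(G_w)=(k-1)(n-1)-\tfrac{1}{2}tk(k-1)$ is correct, the bound $e(G)\le e(G_w)+(n-1)$ is valid even without assuming $w$ dominating, and the chain
\[
q(G)\le n+2k-2-\frac{tk(k-1)}{n-1}\le n+2k-2-\frac{2k(k-1)}{n-1}<n+2k-2-\frac{2k(k-1)}{n+2k-3}<q(S_{n,k})
\]
is sound, with the middle inequality strict since $k\ge 3$. Your diagnosis of where $t\ge 2$ enters is exactly right. The one caveat is that your route relies on (\ref{lb}), which the paper establishes only for $n>5k^2$, whereas the lemma as stated carries no lower bound on $n$; so strictly you prove a slightly weaker statement. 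Since the lemma is invoked only under $n\ge 110k^2$, this costs nothing in the application, and in exchange your proof is more elementary (pure edge counting plus Lemma \ref{Das}, no eigenvector analysis or induction) and makes the dependence on $t$ quantitative in a single step.
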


\begin{proof}
We may suppose $w$ is a dominating vertex of $G,$ otherwise we may add some
edges to $G,$ and $q(G)$ will not decrease. We first consider $t=2,$ that is
to say,
\[
G_{w}=S_{n_{1},k-1}\cup S_{n_{2},k-1}.
\]
Let $u_{1},\cdot\cdot\cdot,u_{k-1}\ $be all the center vertices of
$S_{n_{1},k-1},$ and $v_{1},\cdot\cdot\cdot,v_{k-1}\ $be all the center
vertices of $S_{n_{2},k-1}.$ Let $\mathbf{x}$ be a unit eigenvector to
$q\left(  G\right)  .$ Then by symmetry we have $x_{u_{1}}=\cdot\cdot
\cdot=x_{u_{k-1}},$ and $x_{v_{1}}=\cdot\cdot\cdot=x_{v_{k-1}}.$ Without loss
of generality we assume that $x_{u_{1}}\geq x_{v_{1}}.$

Now combine the components $S_{n_{1},k-1}$ and $S_{n_{2},k-1}$ into
$S_{n_{1}+n_{2},\text{ }k-1},$ and let $u_{1},\cdot\cdot\cdot,u_{k-1}\ $be the
center vertices of $S_{n_{1}+n_{2},\text{ }k-1}.$ Denote by $G^{\prime}$ be
the graph obtained from $G$ by the above perturbation. Set
\[
W=V\left(  S_{n_{2},\text{ }k-1}\right)  \backslash\left\{  v_{1},\cdot
\cdot\cdot,v_{k-1}\right\}  .
\]
Then
\begin{align*}
q\left(  G^{\prime}\right)  -q\left(  G\right)   &  \geq\mathbf{x}^{T}Q\left(
G^{\prime}\right)  \mathbf{x}-\mathbf{x}^{T}Q\left(  G\right)  \mathbf{x}\\
&  =\sum_{i\in W}\left(  k-1\right)  \left[  \left(  x_{u_{1}}+x_{i}\right)
^{2}-\left(  x_{v_{1}}+x_{i}\right)  ^{2}\right]  +\left(  k-1\right)
^{2}\left(  x_{u_{1}}+x_{v_{1}}\right)  ^{2}-2\left(  k-1\right)  \left(
k-2\right)  x_{v_{1}}^{2}\\
&  >0.
\end{align*}
Noting that $G^{\prime}=S_{n,k},$ then we have
\[
q\left(  G\right)  <q\left(  G^{\prime}\right)  =q\left(  S_{n,k}\right)  .
\]
When $t\geq3,$ we may prove the lemma by using induction on $t$ and applying
the above perturbation to $G$ repeatedly$.$
\end{proof}

\begin{lemma}
\label{subgaph H} Let $G$ be a graph of order $n$ with $e\left(  G\right)
\geq\left(  k-1\right)  n-\left(  k^{2}-k-1\right)  .$ If $P_{2k}\nsubseteq$
$G,$ then there exists an induced subgraph $H$ of $G$ such that $\delta(H)\geq
k-1$ and $\nu\left(  H\right)  \geq n-\left(  k^{2}-k-1\right)  .$
\end{lemma}

\begin{proof}
Define a sequence of graphs, $G_{0}\supset G_{1}\supset\cdots\supset G_{r}$
using the following procedure$.$

$G_{0}:=G;$

$i:=0$;

\textbf{while} $\delta(G_{i})<k-1$ \textbf{do begin}

\qquad select a vertex $v\in V(G_{i})$ with $d(v)=\delta(G_{i});$

\qquad$G_{i+1}:=G_{i}-v;$

$\qquad i:=i+1;$

\textbf{end.}

Note that the whole loop must exit before $i=k^{2}-k$. Indeed, $P_{2k}%
\nsubseteq G_{i},$ Lemma \ref{EGp} implies that
\begin{equation}
e(G_{i})\leq\left(  k-1\right)  \left(  n-i\right)  . \label{1}%
\end{equation}

On the other hand,
\begin{equation}
e(G_{i})\geq e\left(  G\right)  -i\left(  k-2\right)  \geq\left(  k-1\right)
n-\left(  k^{2}-k-1\right)  -i\left(  k-2\right)  . \label{2}%
\end{equation}

Then from (\ref{1})  and (\ref{2}) we have $i\leq k^{2}-k-1.$ Let $H=G_{r},$
where $r$ is the last value of the variable $i,$ and then the proof is completed.
\end{proof}

\section{\label{pf} Proof of Theorem \ref{main result}}

\begin{proof}
[Proof of Theorem \ref{main result}]Assume for a contradiction that $q\left(
G\right)  \geq q\left(  S_{n,k}\right)  $. By virtue of Lemma \ref{MxD} we
suppose that $w$ is a dominating vertex of $G.$ Then from (\ref{edge of G}),
we have
\[
e(G_{w})\geq\left(  k-1\right)  \left(  n-1\right)  -\left(  k^{2}-k-1\right)
.
\]
By taking $G_{w}$ as the graph $G$ in Lemma \ref{subgaph H}, we may obtain an
induced subgraph $H$ of $G_{w}$ such that $\delta(H)\geq k-1$ and $\nu
(H)\geq\left(  n-1\right)  -\left(  k^{2}-k-1\right)  .$ Set%
\[
H=\cup_{i=1}^{t}H_{i}\text{, and }\nu(H_{i})=h_{i},
\]
where each $H_{i}$ is a component of $H.$ By virtue of Dirac theorem (see
Lemma \ref{Circumference}), $\delta(H_{i})\geq k-1\geq2$ implies that
$C_{l}\subseteq H_{i}$ for some $l\geq k.$ Then any component of $G_{w}$
contains at most one graph of $\left\{  H_{1,}H_{2,}\cdot\cdot\cdot
,H_{t}\right\}  $ as an induced subgraph, otherwise $P_{2k+1}\subseteq$
$G_{w}$ and then $C_{2k+1}\subseteq G.$ Now for each $1\leq i\leq t,$ let
$F_{i}$ be the component of $G_{w},$ which contains $H_{i}$ as an induced
subgraph, and set%
\[
G_{w}=\left(  \cup_{i=1}^{t}F_{i}\right)  \cup F_{0}.
\]
Obviously, $P_{2k}\nsubseteq F_{i}$ for any $0\leq i\leq t.$

We claim that $F_{0}=\emptyset,$ otherwise
\[
1\leq\nu\left(  F_{0}\right)  \leq|V\left(  G_{w}\right)  \backslash V\left(
H\right)  |\leq k^{2}-k-1,
\]
and then by Lemma \ref{Removing} we obtain the contradiction $q\left(
G\right)  <q\left(  S_{n,k}\right)  .$ By the same token we claim that
$h_{i}\geq2k-1$ for each $1\leq i\leq t.$ Otherwise the order of the component
$F_{i}$ satisfies
\[
k\leq h_{i}\leq\nu\left(  F_{i}\right)  \leq h_{i}+|V\left(  G_{w}\right)
\backslash V\left(  H\right)  |\leq2k-2+k^{2}-k-1=k^{2}+k-3.
\]
Then by virtue of Lemma \ref{Removing}, we also obtain the contradiction
$q\left(  G\right)  <q\left(  S_{n,k}\right)  .$

Since $P_{2k}\nsubseteq H_{i}$ and $\delta(H_{i})\geq k-1,$ from Lemma
\ref{AS}, we deduce two cases for the structure of each $H_{i}$.

(a) $H_{i}\subseteq$ $S_{h_{i},k-1},$ and then
\[
e\left(  H_{i}\right)  \leq\left(  k-1\right)  h_{i}-\frac{k\left(
k-1\right)  }{2}.
\]

(b) $H_{i}=$ $L_{p_{i},k-1}$ with $h_{i}=p_{i}\left(  k-1\right)  +1,$ and
then
\[
e\left(  H_{i}\right)  =\frac{k\left(  h_{i}-1\right)  }{2}<\left(
k-1\right)  h_{i}-\frac{k\left(  k-1\right)  }{2}.
\]
We claim that $H_{i}\neq$ $L_{p_{i},k-1}$ with $h_{i}=p_{i}\left(  k-1\right)
+1.$ Assume for a contradiction that some $H_{i}$ is isomorphic to
$L_{p_{i},k-1}$ with $h_{i}=p_{i}\left(  k-1\right)  +1,$ then
\[
e\left(  H\right)  =%
{\displaystyle\sum\limits_{j=1}^{t}}
e\left(  H_{j}\right)  \leq\left(  k-1\right)  \left(  \nu(H)-h_{i}\right)
-\frac{k\left(  k-1\right)  }{2}+\frac{k\left(  h_{i}-1\right)  }{2}.
\]
On the other hand, from the procedure of Lemma \ref{subgaph H}, we know that
\[
e\left(  H\right)  \geq e\left(  G_{w}\right)  -\left(  n-1-\nu(H)\right)
\left(  k-2\right)  \geq\left(  k-1\right)  \left(  n-1\right)  -\left(
k^{2}-k-1\right)  -\left(  n-1-\nu(H)\right)  \left(  k-2\right)  .
\]
Therefore%
\[
\left(  k-1\right)  \left(  n-1\right)  -\left(  k^{2}-k-1\right)  -\left(
n-1-\nu(H)\right)  \left(  k-2\right)  \leq\left(  k-1\right)  \left(
\nu(H)-h_{i}\right)  -\frac{k\left(  k-1\right)  }{2}+\frac{k\left(
h_{i}-1\right)  }{2},
\]
which implies that $h_{i}<k+1,$ and this is a contradiction to $h_{i}\geq
2k-1$. So each $H_{i}$ is a subgraph of $S_{h_{i},k-1}$.

For fixed $i,$ assume now that $I$ is the independent set of $H_{i}$ of order
$h_{i}-\left(  k-1\right)  ,$ and set $J=V\left(  H_{i}\right)  \backslash I.$
Clearly, $\delta\left(  H_{i}\right)  \geq k-1$ implies that every vertex of
$I$ is joined to every vertex in $J.$ Therefore, for any two vertices in $I$
there exists a path of order $2k-1$ with them as endvertices$.$ If $u$ is a
vertex in $V\left(  F_{i}\right)  \backslash V\left(  H_{i}\right)  $ and
$\Gamma\left(  u\right)  \cap V\left(  H_{i}\right)  \neq\emptyset,$ then we
have $\Gamma_{F_{i}}\left(  u\right)  \subseteq J,$ otherwise $P_{2k}\subseteq
F_{i}.$ Furthermore, for any vertex $v\in V\left(  F_{i}\right)  \backslash
V\left(  H_{i}\right)  $ we may deduce that $\Gamma\left(  v\right)  \cap
V\left(  H_{i}\right)  \neq\emptyset$, hence $\Gamma_{F_{i}}\left(  v\right)
\subseteq J$. Therefore $\left(  V\left(  F_{i}\right)  \backslash V\left(
H_{i}\right)  \right)  \cup I$ is an independent set of $F_{i}$, and then
$F_{i}$ is a subgraph of $S_{\nu\left(  F_{i}\right)  ,k-1}.$ Thus
\[
G_{w}=\cup_{i=1}^{t}F_{i},
\]
where each $F_{i}$ is a subgraph of $S_{\nu\left(  F_{i}\right)  ,k-1}.$ Note
that $q\left(  G\right)  $ will not decrease when adding some edges to $G,$
hence we may suppose that $F_{i}$ is isomorphic to $S_{\nu\left(
F_{i}\right)  ,k-1}.$ If $t\geq2,$ then by Lemma \ref{one S} we deduce the
contradiction $q\left(  G\right)  <q\left(  S_{n,k}\right)  .$ If $t=1,$ we
have $q\left(  G\right)  \leq q\left(  S_{n,k}\right)  $ with equality holding
if and only if $G_{w}=S_{n-1,k-1}$ and then $G=S_{n,k}.$
\end{proof}

\bigskip

\textbf{Acknowledgement}

The author would like to thank Prof. Vladimir Nikiforov for his kind
encouragement and invaluable suggestions, and thank the referees for their
careful reading and useful comments.

\end{document}